\author{Olga V. Chuvashova}
\address{
Department of Higher Algebra\\
Faculty of Mechanics and Mathematics\\
Moscow State University\\
119992 Moscow, Russia}
\email{chuvashova@gmail.com}
\author{Nikolay A. Pechenkin}
\address{
Algebraic Geometry Section\\
Steklov Mathematical Institute\\
Russian Academy of Sciences\\
Gubkin str. 8, GSP-1\\
119991 Moscow, Russia}
\email{kolia.pechnik@gmail.com}
\thanks{The second author was partially supported by the grants NSh-5139.2012.1, MK-6612.2012.1, RFFI 12-01-31506, RFFI 12-01-33024}
\title{The moduli stack of affine stable toric varieties}
\keywords{Torus action, stable toric variety, moduli stack}
\subjclass[2010]{14D20, 14D23, 14L30, 14M25}
\newcommand{\Sp}{{\rm Spec\ }}
\newcommand{\Isom}{{\rm Isom}}
\newcommand{\Oo}{\mathcal{O}}
\renewcommand{\H}{\mathcal{H}}
\newcommand{\cQ}{\mathcal{Q}}
\newcommand{\Q}{\mathbb{Q}}
\newcommand{\A}{\mathbb{A}}
\newcommand{\X}{\mathbb{X}}
\newcommand{\T}{\mathbb{T}}
\renewcommand{\P}{\mathbb{P}}
\newcommand{\Z}{\mathbb{Z}}
\newcommand{\N}{\mathbb{N}}
\newcommand{\lsp}{\textbf{Spec}}
\newcommand{\cone}{{\rm cone}}
\newcommand{\G}{\mathcal{G}}
\newcommand{\Chi}{\mathfrak{X}}
\newcommand{\F}{\mathcal{F}}
\newcommand{\M}{\mathcal{M}}
\renewcommand{\Im}{\text{Im}\,}
\newtheorem{theorem}{Theorem}[section]
\newtheorem{lemma}[theorem]{Lemma}
\theoremstyle{definition}
\newtheorem{definition}[theorem]{Definition}
\newtheorem{example}[theorem]{Example}
\theoremstyle{remark}
\newtheorem{remark}[theorem]{Remark}
\begin{document}

\begin{abstract}
Let $\X$ be an irreducible affine $T$-variety. We consider families of affine stable toric $T$-varieties over $\X$ and give a description of the corresponding moduli space as the quotient stack of an open subscheme in a certain toric Hilbert scheme under the action of a torus.
\end{abstract}

\maketitle

\section{Introduction}

Let $T$ be an algebraic torus. By a $T$-variety ($T$-scheme) we, as usual, mean a variety (scheme) with a (not necessarily effective) action of $T$. It is called toric if $T$ acts with dense open orbit. The notion of a stable toric $T$-variety was introduced by Alexeev in 2002 \cite{Al}. It provides a generalization of the notion of the toric $T$-variety to some reducible varieties which are unions of toric varieties, glued along toric subvarieties. Stable toric varieties are analogs of stable curves in the case of toric varieties. In the affine case its definition can be formulated in a pretty simple way (see Definition~\ref{astv}).

In this work we use the notion of stable toric variety to consider a version of a moduli space parameterizing general $T$-orbits closures and some their degenerations for the action of $T$ on the irreducible affine variety $\X$. We consider a category $\M_{\X,T}$ of the families of affine stable toric $T$-varieties over $\X$. We show that objects in $\M_{\X,T}(k)$ possess finite automorphism groups (non-trivial in general), so in general $\M_{\X,T}$ can not be represented by a scheme. Then one may ask if it can be represented by a stack. The answer to this question is positive. Our main result is Theorem~\ref{main}, which states that $\M_{\X,T}$ is a quotient stack.

The proof of the main theorem is constructive. For each irreducible affine $T$-variety $\X$ we choose a finite subset $A$ in the saturation of its weight monoid. Here $A$ is required to satisfy some technical conditions. Then we consider the $T$-variety $\X\times \A^d$, where $d=\#A$ and the action of $T$ on $\A^d$ is given by the weights in $A$. There is also an action of $d$-dimensional torus $\T$ on $\X\times \A^d$ given by its diagonal action on $\A^d$. It induces the action of $\T$ on the toric Hilbert scheme $H_{\X\times\A^d, T}$. We provide explicit conditions that define an open $\T$-invariant subscheme $\widetilde{H}\subset H$. The precise statement of Theorem~\ref{main} is that there is an equivalence of the categories fibred in groupoids $\M_{\X,T}\cong [\widetilde H/\T]$.

\medskip

The similar questions about stable varieties and their moduli spaces were investigated by Alexeev and Brion in \cite{Al}, \cite{AB1}, \cite{AB}, \cite{AB2}. For example, in \cite{AB2} the moduli space of stable spherical varieties over a projective space is constructed. For a detailed review of the history of this question we refer
to the introduction of \cite{AB2}.

\medskip

This paper is organized as follows. In Section~\ref{int} we briefly recall necessary information on $T$-varieties, GIT-fans and toric Hilbert schemes. In Section~\ref{dastv} we provide definitions of affine stable toric varieties and families of affine stable toric varieties. Then we prove several technical statements analogues to those of \cite{AB2}. In Section~\ref{mod} we investigate the category of affine stable toric varieties, formulate and prove the main result of this paper
and provide a detailed example.

\section{Preliminaries}\label{int}

Fix the base field $k$ to be an algebraically closed field of characteristic zero. We work in the category of schemes over $k$ everywhere except the last section, where we need to expand it to the category of algebraic stacks over $k$. Our references are \cite{EH}, \cite{Har} on schemes, and \cite{Go}, \cite{LM} on stacks. A variety is a separated reduced scheme of finite type.

\subsection{Basic facts about T-varieties}

Let $T:=(k^{\times})^n$ be an algebraic torus, and $X$ be an affine $T$-variety. Then its algebra of regular functions $k[X]$ is graded by the group $\Chi(T)$ of characters of
$T$  $$k[X]=\bigoplus_{\chi\in\, \Chi(T)} k[X]_{\chi},$$
 where  $k[X]_{\chi}$ is the
subspace of $T$-semiinvariant functions of weight $\chi$. The \emph{weight set} is defined as
$$\Sigma_{X}:=\{\chi\in \Chi(T)\ : \ k[X]_\chi\ne 0\}.$$ If $T$ acts on $X$ faithfully then $\Sigma_X$ generates the group $\Chi(T)$. Denote by $\widehat{\Sigma}_X:=\{\chi\in\Chi(T)\text{ such that }n\chi\in\Sigma_X\text{ for some }n\in \Z_{>0} \}$ the \emph{saturation of} $\Sigma_X$.

Assume that $X$ is irreducible. Then $\Sigma_X$ formes a finitely generated monoid so we call it the \emph{weight monoid}. It is easy to see that in this case $\widehat{\Sigma}_X=\sigma_X \cap \Chi(T)$, where $\sigma_X:=\text{cone}(\Sigma_X)\subset \Chi(T)_{\Q}:=\Chi(T)\otimes_{\Z}\Q$ is the \emph{weight cone}.

Recall from \cite{BH} the notion of a GIT-fan. For any $x\in
X$, the {\it orbit cone} $w_x$ associated to $x$ is the following
convex cone in $\Chi(T)_{\Q}$
$$w_x:=\cone\{\chi\in \Sigma : \exists\ f\in k[X]_{\chi} {\rm\ such \ that \ } f(x)\ne
0\}=\Sigma_{\overline{Tx}}.$$
For any character $\chi\in \Sigma$, the associated {\it
GIT-cone}~$\sigma_\chi$ is the intersection of all orbit cones
containing $\chi$
 $$\sigma_\chi=\bigcap_{\{x\in X : \chi\in
w_x\}}w_x.$$

Recall that a \emph{quasifan} $\Lambda$ in $\Chi(T)_{\Q}$ is a finite collection of (not necessarily pointed) convex, polyhedral
cones in $\Chi(T)_{\Q}$ such that for any $\lambda\in\Lambda$ also all faces of $\lambda$ belong to
$\Lambda$, and for any two $\lambda, \lambda'\in\Lambda$
 the intersection $\lambda\cap\lambda'$
is a face of both, $\lambda$ and $\lambda'$. A
quasifan is called a \emph{fan} if it consists of pointed cones. The \emph{support} $|\Lambda|$ of a quasifan $\Lambda$ is the union of its cones.

By \cite[Theorem~2.11]{BH}, the collection of GIT-cones forms a fan
$\cQ_X$ having $\sigma_X$ as its support.

\subsection{The toric Hilbert scheme}

In this section we give a short overview on the toric Hilbert schemes. For the details we refer reader to \cite{B}, \cite{CP}, \cite{PS}.

\begin{definition}
 Given a function $h:\Chi(T)\to \N$, a \emph{family of affine} $T$\emph{-schemes over a base} $S$ \emph{with Hilbert function} $h$ is a $T$-scheme $Z$ equipped with an affine, $T$-invariant morphism of finite type $p: Z\to S$ such that for all $\chi\in \Chi(T)$ the sheaf $p_*(\Oo_Z)_\chi$ is locally free of rank $h(\chi)$.
\end{definition}

Note that the morphism $p$ here is automatically flat. If $h(0)=1$ then $p$ is a good quotient of $Z$ by the action of $T$ (see \cite[Section~2.3]{ADHL} for the definition of good quotients).

\medskip

The functions $h$ we will consider are of the form
$$h=h_\Sigma(\chi):= \left\{ \begin{array}{rl}
 1 & \mbox{if}\ \ \chi\in \Sigma, \\
 0 & \mbox{otherwise.}
\end{array}\right. $$
for the finitely generated monoid $\Sigma\subset \Chi(T)$. Fix an irreducible affine $T$-variety $X$.

\begin{definition}  The {\it toric Hilbert functor}
is the contravariant functor $\H_{X, T}$ from the category of schemes to the category of sets
assigning to any scheme $S$ the set of all closed $T$-stable subschemes $Z\subseteq S\times X$ such that
the projection $p:Z\to S$ is a family of affine $T$-schemes with Hilbert function $h_{\Sigma_X}$.
\end{definition}

The toric Hilbert functor $\H_{X, T}$ is represented by the quasiprojective scheme $H_{X, T}$ called the \emph{toric Hilbert scheme}. We will denote $U_{X, T}$ the universal family over $H_{X, T}$. For any $Z\in \H_{X, T}(S)$ we have $Z=S\times_{H_{X, T}}U_{X, T}$.

\section{Affine stable toric varieties: definitions and properties}\label{dastv}

\begin{definition}\label{astv}
By an affine (multiplicity-free) {\it stable toric variety} under an action of an algebraic torus $T$ we mean an affine $T$-variety $X$ such that $k[X]$ is multiplicity-free as a $T$-module and $\Sigma_X=\widehat{\Sigma}_X$.
\end{definition}

Recall that a module $V$ is multiplicity-free if every irreducible submodule of $V$ occurs with multiplicity one. For affine stable toric variety $X$ it means that $\dim k[X]_{\chi}=1$ for all $\chi \in \Sigma_X$.

One can show that this definition coincides with the general definition of a stable toric variety given in \cite[Section~1.1.A]{Al} (see \cite[Lemma~2.3]{AB1}). If $X$ is irreducible, then it is a (normal) toric $T$-variety. The easiest example of reducible affine stable toric variety is a cross, which is defined as a spectrum of $k[x,y]/(xy)$, where the action of $T=k^\times$ is given by grading $\deg(x)=1$, $\deg(y)=-1$. More generally, for any quasifan $\Lambda$ in $\Chi(T)$ one can define the quasifan algebra $k[\Lambda]$ in the following way
 $$k[\Lambda]:=\bigoplus_{\chi \in |\Lambda| \cap \Chi(T)} k\cdot x^{\chi},
\qquad
x^{\chi_1}\cdot x^{\chi_2} :=
\left\{
\begin{array}{ll}
x^{\chi_1+\chi_2} & \text{ if there exist }\lambda \in  \Lambda \text{ such that }\chi_1,\,\chi_2 \in \lambda, \\
0           & \text{ else.}
\end{array}
\right.
$$
 Then $X[\Lambda]:=\Sp k[\Lambda]$ is an affine stable toric $T$-variety. These are called toric bouquets in \cite[Definition~7.2]{AH} in the case when $\Lambda$ is the normal fan of some polyhedron. The complete description of affine stable toric varieties comes from this construction by adding a cocycle (see \cite[Lemma~2.3.11 and Theorem~2.3.14]{Al} for details).


\medskip

From now on let $\X$ denote an irreducible affine $T$-variety.

\begin{definition}\label{family}
A {\it family of affine stable toric} $T$-{\it varieties over} $\X$ \emph{with a base} $S$ is a $T$-scheme $X$ equipped with two morphisms $p_S$ from $X$ to some scheme $S$, and $p_\X: X\to \X$ such that

\begin{enumerate}
\item $p_\X$ is $T$-equivariant;
\item $p_S$ is a family of affine $T$-schemes with Hilbert function $h_{\widehat{\Sigma}_{\X}}$;
\item geometric fibers of $p_S$ are all reduced;
\item the morphism $p_{S\times \X}:=p_S\times p_\X : X\to S\times \X$ is finite.
\end{enumerate}
\end{definition}

Any geometric fiber $X_{s}$ of such family $X$ is an affine stable toric variety for the torus $T$.
If all  $X_{s}$ are irreducible (that is, $X$ is a family of (normal) affine toric varieties), then $X$ is locally trivial over $S$ (see \cite[Lemma~7.4]{AB1}).


\bigskip

For a family $p_S: X\to S$ of affine stable toric $T$-varieties over $\X$, denote by $\F_\chi$ the invertible sheaf of $\Oo_S$-modules $(p_S)_*(\Oo_X)_\chi$, $\chi\in\widehat{\Sigma}_{\X}$ :
$$\F:=(p_S)_*\Oo_X=\bigoplus_{\chi\in \widehat{\Sigma}_{\X}} \F_\chi.$$
For any $\chi, \chi'\in \widehat{\Sigma}_{\X}$, we have the multiplication map $$m_{\chi, \chi'}:\F_\chi\otimes_{\Oo_S}\F_{\chi'}\to \F_{\chi+\chi'}.$$
Since the fibers of $p_S$ are reduced, the $N$th power maps $\F_\chi^N\to \F_{N\chi}$ are isomorphisms for all $\chi\in \widehat{\Sigma}_{\X}$.


The following lemmas are analogues of \cite[Lemma~4.7]{AB2} and \cite[Lemma~4.8]{AB2} in our affine settings.

\begin{lemma}\label{lem1}
 The following properties hold for any family $p_S: X\to S$ of affine stable toric $T$-varieties over $\X$:
\begin{enumerate}
\item The weight cone $\sigma_Y$ of any $T$-subvariety $Y$ of any geometric fiber $X_{s}$ is a union of GIT-cones $\sigma\in \cQ_{\X}$.

\item For any GIT-cone $\sigma\in \cQ_{\X}$ and $\chi, \chi'\in \sigma\cap\Chi(T)$ the map $m_{\chi, \chi'}$ is an isomorphism.
\end{enumerate}
\end{lemma}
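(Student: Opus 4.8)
The plan is to reduce both statements to properties of a single geometric fiber $X_s$ together with the flatness/reducedness hypotheses built into Definition~\ref{family}, and then to invoke the GIT-fan machinery of \cite{BH} for $\X$ via the finite equivariant morphism $p_\X$.

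For part (1), I would fix a geometric fiber $X_s$ and a $T$-subvariety $Y\subseteq X_s$. Since $p_\X$ is $T$-equivariant and finite (hypothesis (4) gives finiteness of $p_{S\times\X}$, hence of $p_\X$ on each fiber), the induced map $Y\to\X$ is a finite $T$-equivariant morphism, so on weight monoids it induces an inclusion $\Sigma_Y\subseteq\widehat\Sigma_\X$ with $\cone(\Sigma_Y)=\sigma_Y$ a subcone of $\sigma_\X$. Because $X_s$ is a stable toric variety, $\Sigma_Y$ is saturated and $Y$ is a normal toric $T$-variety; in particular $Y=\overline{Tx}$ for a generic point $x$ of the open orbit, and $\sigma_Y=w_x$ is an orbit cone in the sense of \cite{BH}. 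Now I would use the structure of the universal family: a point of $Y$ comes from a point of $\X$ (via $p_\X$), and for a generic such point the orbit cone in $\X$ equals $\sigma_Y$. Since every orbit cone is a union of GIT-cones \cite[Theorem~2.11]{BH} and the GIT-cones refine all orbit cones, $\sigma_Y$ — being itself realized as (or cut out by intersections of) orbit cones pulled back along $p_\X$ — must be a union of cones of $\cQ_\X$. The key point to nail down carefully here is the compatibility: why the weight cone of $Y$, a priori living over $S$, is forced to coincide with an orbit cone of the \emph{fixed} variety $\X$ rather than of the total space $X$; this uses that $p_\X$ is finite and $T$-equivariant so that semiinvariant functions on $\X$ restrict to a cofinal family of semiinvariants on $Y$.

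For part (2), fix a GIT-cone $\sigma\in\cQ_\X$ and $\chi,\chi'\in\sigma\cap\Chi(T)$. I want $m_{\chi,\chi'}\colon\F_\chi\otimes_{\Oo_S}\F_{\chi'}\to\F_{\chi+\chi'}$ to be an isomorphism of invertible sheaves. By Nakayama it suffices to check this on each geometric fiber $X_s$, i.e.\ to show that in $k[X_s]=\bigoplus_{\psi}k[X_s]_\psi$ the multiplication $k[X_s]_\chi\otimes k[X_s]_{\chi'}\to k[X_s]_{\chi+\chi'}$ is an isomorphism of one-dimensional spaces — equivalently, that nonzero semiinvariants $f_\chi,f_{\chi'}$ on $X_s$ have nonzero product. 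Since $X_s$ is reduced and $\chi,\chi'$ lie in a common GIT-cone of $\X$, I would argue that they lie in a common orbit cone $w_x$ for some point $x$ in (the fiber over) $\X$, so there exist semiinvariants not vanishing at $x$; pulling back along the finite equivariant map $Y\to\X$ for an appropriate component $Y$ of $X_s$ containing $x$, and using that on each irreducible component the weight monoid is the saturated monoid of a cone, the product $f_\chi f_{\chi'}$ is a nonzero semiinvariant of weight $\chi+\chi'$. The multiplicity-freeness of $k[X_s]$ then forces $m_{\chi,\chi'}$ to be an isomorphism fiberwise, hence globally.

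The main obstacle, I expect, is part (1): translating the abstract statement ``$\sigma_Y$ is a union of GIT-cones of $\cQ_\X$'' into something checkable requires identifying $\sigma_Y$ with an honest orbit cone of $\X$, and this identification is exactly where the hypotheses of Definition~\ref{family} (equivariance and finiteness of $p_\X$, plus reducedness of fibers so that $N$th power maps are isomorphisms) all get used simultaneously. Once part (1) is in place, part (2) is essentially a fiberwise nonvanishing computation combined with Nakayama, so it should be routine; the bookkeeping there is to make sure one picks a component $Y$ of $X_s$ whose weight monoid contains both $\chi$ and $\chi'$, which is possible precisely because they lie in a common GIT-cone and GIT-cones refine orbit cones.
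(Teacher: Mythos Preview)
Your approach matches the paper's: reduce to a geometric fiber, identify the weight cone of an irreducible $T$-subvariety with an orbit cone of $\X$ via the finite equivariant map $p_\X$, use that orbit cones are unions of GIT-cones, and for (2) restrict to an irreducible component whose cone contains the given GIT-cone and conclude fiberwise (then globally by Nakayama). One slip to fix in (1): you assert that an arbitrary $T$-subvariety $Y\subseteq X_s$ is a normal toric variety, but this fails for reducible $Y$ (e.g.\ $Y=X_s$ when $X_s$ is the cross); the paper --- and implicitly your own closing paragraph --- first passes to irreducible components, so make that reduction explicit before writing $Y=\overline{Tx}$.
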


\begin{proof}
 (1) The weight cone of any $T$-subvariety of $X_s$ is a union of weight cones of its irreducible components. Note that any normal toric $T$-variety $Y$ equipped with a finite T-equivariant morphism $\phi:Y\to \X$ is uniquely determined by its image $\phi(Y)$, and the weight cones $\sigma_Y$ and $\sigma_{\psi(Y)}$ coincide. By definition, the minimal $GIT$-cone containing a given character $\chi$ is the intersection of the weight cones $\sigma_{\overline{Tx}}$ of all orbit closures $\overline{Tx}\subset \X$ such that $\chi\in \sigma_{\overline{Tx}}$. Thus the statement is clear.

 (2) Let $s\in S$ be a geometric point. Then $X_s$ is an affine stable toric $T$-variety. Note that $\F_s=k[X_s]\otimes (\Oo_S)_s$ and recall that $k[X_s]$ is multiplicative-free. Now one can use the restriction on irreducible component of $X_s$ and the first part of this lemma to show that $(m_{\chi, \chi'})_s$ is an isomorphism. It follows that $m_{\chi, \chi'}$ is an isomorphism.
\end{proof}

The finite $T$-equivariant morphism $p_{S\times \X}: X=\textbf{Spec}_S \F \to S\times \X=\textbf{Spec}_S (\Oo_S\otimes k[\X])$ corresponds to the homomorphism $\varphi: S\otimes k[\X] \to \F$ of graded $\Oo_S$-algebras, which makes $\F$ into a finite $\Oo_S\otimes k[\X]$-module.

\begin{lemma}\label{lem2}
\begin{enumerate}
 \item There exists a finite subset $A\subset \widehat{\Sigma}_{\X}$ such that the $\Oo_S$-algebra $\F:=(p_S)_*\Oo_X$ is generated by $\F_\chi$, $\chi\in A$ for any family $p_S:X\to S$ of affine stable toric $T$-varieties over $\X$.

 \item There exists a positive integer $N$, which depends only on the $T$-variety $\X$, such that  the map  $\varphi_{N\chi}: \Oo_S\otimes k[\X]_{N\chi}\to \F_{N\chi}$ is surjective for any family $p_S:X\to S$ of affine stable toric $T$-varieties over $\X$ and for any $\chi\in\widehat{\Sigma}_{\X}$.
\end{enumerate}
\end{lemma}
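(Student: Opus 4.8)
The plan is to reduce both statements to finiteness properties of the weight monoid $\widehat{\Sigma}_{\X} = \sigma_{\X}\cap\Chi(T)$ together with the key fact (already available from Lemma~\ref{lem1}(2)) that the multiplication maps $m_{\chi,\chi'}$ are isomorphisms whenever $\chi,\chi'$ lie in a common GIT-cone. For part (1), I would first fix once and for all a finite set of generators $\chi_1,\dots,\chi_m$ of the monoid $\widehat{\Sigma}_{\X}$ and a finite set of generators of the weight monoid $\Sigma_{\X}$; then for every pair $(i,j)$ I would pick a GIT-cone $\sigma\in\cQ_{\X}$ containing both $\chi_i$ and $\chi_j$ (possible since $\cQ_{\X}$ has support $\sigma_{\X}$, so the segment joining them meets some maximal cone), and I would take $A$ to be the union of all the $\chi_i$, a chosen finite generating set for $\widehat{\Sigma}_{\X}\cap\sigma$ for each GIT-cone $\sigma$, and enough extra characters so that within each GIT-cone the selected characters generate $\widehat{\Sigma}_{\X}\cap\sigma$ as a monoid. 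The point is that the graded pieces $\F_\chi$ of any family are invertible, so to see that the $\F_\chi$ with $\chi\in A$ generate $\F$ it suffices to check that every $\F_{\psi}$, $\psi\in\widehat{\Sigma}_{\X}$, is reached: write $\psi$ as a sum of generators $\chi_{i_1}+\dots+\chi_{i_r}$; grouping consecutive summands that happen to lie in a common cone and using Lemma~\ref{lem1}(2) repeatedly, one expresses $\F_\psi$ as a product of $\F_\chi$ with $\chi\in A$. The crucial combinatorial input is that one can always rearrange/refine such a decomposition so that each partial sum stays inside $\sigma_{\X}$ and successive partial sums lie in a common GIT-cone; this follows from the fact that $\cQ_{\X}$ is a \emph{fan} (finitely many cones, each cone a face-closed piece of $\sigma_{\X}$) and that the path of partial sums stays in the convex cone $\sigma_{\X}=|\cQ_{\X}|$.

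For part (2), the goal is uniform surjectivity of $\varphi_{N\chi}\colon \Oo_S\otimes k[\X]_{N\chi}\to\F_{N\chi}$. I would argue as follows. First reduce to geometric points: by Nakayama, $\varphi_{N\chi}$ is surjective iff its fiber $k[\X]_{N\chi}\to k[X_s]_{N\chi}$ is surjective at every geometric point $s$, and since $\F_{N\chi}$ is invertible this fiber map is either zero or surjective; it is surjective precisely when the one-dimensional space $k[X_s]_{N\chi}$ is hit, i.e.\ when $N\chi\in\Sigma_{X_s}$ is reached by multiplying together functions coming from $k[\X]$. Now use that $k[X_s]$ is a finite $k[\X]$-module (condition (4) of Definition~\ref{family}), so $\Sigma_{X_s}$ differs from a subsemigroup generated by $\Sigma_{\X}$ only by a bounded "box": more precisely, the saturation condition $\Sigma_{X_s}=\widehat{\Sigma}_{X_s}$ for a stable toric variety forces $X_s$ to be (a cocycle twist of) a quasifan algebra $k[\Lambda_s]$ with $|\Lambda_s|\subseteq\sigma_{\X}$, and on each cone $\lambda$ of $\Lambda_s$ the algebra is a saturated affine semigroup algebra that is finite over the semigroup algebra of $\Sigma_{\X}\cap\lambda$. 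Since there are only finitely many possible cones $\lambda$ (they are unions of GIT-cones of $\cQ_{\X}$ by Lemma~\ref{lem1}(1)), and for a fixed finitely generated saturated semigroup $\Sigma_{\X}\cap\lambda$ generating the cone $\lambda$ the set of lattice points not in the semigroup generated by a fixed generating set of $\Sigma_{\X}\cap\lambda$ is finite, one finds a single $N$ (a common multiple of the relevant torsion indices, large enough to clear all the finitely many "holes") such that $N\chi$ lies in the subsemigroup generated by $\Sigma_{\X}\cap\lambda$ for every $\chi\in\sigma_{\X}\cap\Chi(T)$ and every such $\lambda$. That is exactly the statement that $k[\X]_{N\chi}$ surjects onto $k[X_s]_{N\chi}$.

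The main obstacle I anticipate is the last uniformity claim in part (2): one must produce $N$ \emph{independent of the family}, which requires knowing that only finitely many quasifans $\Lambda_s$ (equivalently, finitely many possible weight cone stratifications of the fibers) can occur, and that each contributes only finitely many "holes" with a bounded index. The finiteness of the possible $\Lambda_s$ comes from Lemma~\ref{lem1}(1) (every cone involved is a union of cones of the fixed finite fan $\cQ_{\X}$), and the bounded-index statement is the classical fact that for a finitely generated subsemigroup $\Gamma$ of a lattice with $\operatorname{cone}(\Gamma)=\tau$, there is $N_\Gamma$ with $N_\Gamma\,(\tau\cap\Chi(T))\subseteq\Gamma$; applying this to $\Gamma=$ the semigroup generated by our fixed generators of $\Sigma_{\X}\cap\lambda$ for each of the finitely many $\lambda$, and taking the least common multiple, yields the desired $N$. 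Assembling these pieces — fixing the finite fan $\cQ_{\X}$, enumerating the finitely many cone-strata, invoking the semigroup index lemma on each, and passing to a common $N$ — is routine once the reductions above are in place, so I would present it in that order.
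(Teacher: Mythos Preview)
Your construction of $A$ in part~(1) contains the right set (generators of $\widehat{\Sigma}_{\X}\cap\sigma$ for each $\sigma\in\cQ_{\X}$, which is exactly the paper's choice), but your argument for why it works is both unnecessarily complicated and contains a false step. The claim that any pair $\chi_i,\chi_j$ lies in a common GIT-cone is simply wrong: if $\cQ_{\X}$ properly subdivides $\sigma_{\X}$, characters in the interiors of distinct maximal cones share no common cone. The ``path of partial sums'' device is not needed. The correct argument is one line: any $\psi\in\widehat{\Sigma}_{\X}$ lies in \emph{some} GIT-cone $\sigma$; since $A\cap\sigma$ generates $\widehat{\Sigma}_{\X}\cap\sigma$, write $\psi=\sum a_j$ with all $a_j\in A\cap\sigma$, and then Lemma~\ref{lem1}(2) applied within that single cone shows $\bigotimes\F_{a_j}\to\F_\psi$ is an isomorphism.

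Part~(2) has a genuine gap. You assert that on a cone $\lambda$ of $\Lambda_s$ the fiber algebra is finite over ``the semigroup algebra of $\Sigma_{\X}\cap\lambda$'', and then pick $N$ so that $N\chi\in\Sigma_{\X}\cap\lambda$. But what you actually need is that $k[\X]_{N\chi}\to k[X_s]_{N\chi}$ is \emph{nonzero}, and the image of $k[\X]$ in the $\lambda$-component is $k[\Sigma_{\overline{Z}}]$ for $\overline{Z}\subset\X$ the image of the corresponding irreducible component---and $\Sigma_{\overline{Z}}$ can be a proper submonoid of $\Sigma_{\X}\cap\lambda$, depending on the family. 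Concretely, take $\X=\A^2$ with $t\cdot(a,b)=(t^2a,t^3b)$ and $X_s=\A^1$ mapping by $c\mapsto(0,c^3)$: your recipe gives $N=2$ (since $2\N\subset\langle 2,3\rangle$), but $k[\X]_2=\langle a\rangle$ maps to zero in $k[X_s]_2$. The paper avoids this by a direct module-theoretic argument: for each $\chi$ pick $n_\chi$ so that the Veronese $k[\X]^{(n_\chi\chi)}$ is generated in degree one, observe that locally $\F^{(n_\chi\chi)}\cong\Oo_S[z]$, and use finiteness of $\F$ over $\Oo_S\otimes k[\X]$ to force $z\in\Im\varphi_{n_\chi\chi}$ (otherwise $\Oo_S[z]$ would be finite over $\Oo_S$). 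Then $N=\mathrm{lcm}\{n_\chi:\chi\in A\}$ works for all $\chi\in\widehat{\Sigma}_{\X}$ via part~(1). This argument never inspects the fibers and makes the uniformity in $N$ automatic.
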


\begin{proof}
(1) Let $A\subset \widehat{\Sigma}_{\X}$ be a finite subset such that the monoid $\widehat{\Sigma}_{\X}\cap \sigma$ is generated by $A\cap \sigma$, for any $\sigma\in\cQ_X$. By Lemma \ref{lem1} (2), it follows that $A$ satisfies the statement of the lemma.

(2)  We use notations $\F^{(\chi)}$ and $k[\X]^{(\chi)}$ for the sheaf of $\Oo_S$-algebras $\bigoplus_{n=0}^{\infty} \F_{n\chi}$ and the algebra $\bigoplus_{n=0}^{\infty} k[\X]_{n\chi}$ respectively. There exists a positive integer $n_\chi$ such that $k[\X]^{(n_\chi\chi)}$ is generated by its component of degree one. By weight considerations we see that $\F^{(n_\chi\chi)}$ is a finite $k[\X]^{(n_\chi\chi)}$-module. Since the property of morphism to be surjective is local, we can assume that the sheaf of $\Oo_S$-algebras $\F^{(n_\chi\chi)}$ is isomorphic to the polynomial ring $\Oo_S[z]$. If $z\not\in \Im\varphi_{n_\chi\chi}$, then $\varphi (\F^{(n_\chi\chi)})=\Oo_S$ what is in contradiction with the finiteness condition. So, $z\in \Im\varphi_{n_\chi\chi}$ and $\varphi_{n_\chi\chi}$ is surjective.

The least common multiple $N$ of $n_\chi$, $\chi\in A$, satisfies the statement of the lemma.
\end{proof}

\begin{remark}\label{fin}
In the proof of part (2), in fact, we deduce a criterion of the finiteness of the morphism $p_{S\times\X}$ for arbitrary families $p_S: X\to S$ of affine $T$-schemes with Hilbert function $h_{\widehat{\Sigma}_{\X}}$ equipped with the $T$-equivariant morphism $p_{\X}: X\to \X$. Indeed, choose $A\subset\widehat{\Sigma}_{\X}$ and $N\in \N$ satisfying the conditions of Lemma~\ref{lem2}. Then $p_{S\times \X}$ is a finite morphism if and only if the homomorphism $\Oo_S\otimes k[\X]_{N\chi}\to (p_S)_{*}(\Oo_X)_{N\chi}$ is surjective for all $\chi\in A$.
\end{remark}




\section{Moduli stack of affine stable toric varieties}\label{mod}

From now on we fix a subset $A=\{\chi_1,\ldots,\chi_d\}\subset \widehat{\Sigma}_{\X}$, where $d:=\#A$, and a positive integer $N$ satisfying the conditions of Lemma \ref{lem2}.
We consider the category of stable toric varieties over $\X$ and prove that it is a quotient stack.

\begin{definition}
The {\it category} $\M_{\X, T}$ {\it of affine stable toric varieties} over $\X$ is the category whose objects are families of affine stable toric $T$-varieties over $\X$. For two families $p_{S}:X\to S$ and $p'_{S}: X'\to S'$ in $\M_{\X, T}$, a morphism $\phi$ is a pair consisting of a morphism $ \phi_S: S\to S'$ and a $T$-equivariant morphism $\phi_X:X\to X'$ such that the following diagrams are commutative:

$$
\xymatrix{
X \ar[rr]^{\phi_X}\ar[dd]_{p_S} && X' \ar[dd]^{p'_S} && X \ar[rr]^{\phi_X}\ar[ddr]_{p_{\X}} && X' \ar[ldd]^{p'_{\X}} \\
&&& , \\
S \ar[rr]_{\phi_S} && S' &&& \,\,\X\, . \\
}
$$
\end{definition}

\begin{remark} \label{rmk}
Note that the first diagram is cartesian. Indeed, the fibered product $S\times_{S'} X'$ is a family of affine stable toric $T$-varieties over $\X$ with base $S$, and we have a morphism $X\to S\times_{S'} X'$ in $\M_{\X, T}$. So we have only to show that for any families $X, X'$ in  $\M_{\X, T}$ over the same base $S$ and any morphism $\phi=({\rm Id}_S, \phi_X)$ from $X$ to $X'$ in $\M_{\X, T}$, the morphism $\phi_X$ is an isomorphism. Denote $\F:=(p_S)_*{\Oo_X}$ and $\F':=(p'_S)_*{\Oo_{X'}}$. The $T$-equivariant morphism $\phi_X$ corresponds to the homomorphism $\alpha: \F'\to\F$ of graded $\Oo_S$-algebras. Then, by Lemma \ref{lem2} (2), it follows that for any $\chi\in \widehat{\Sigma}_{\X}$, the morphism of invertible sheaves of  $\Oo_S$-modules  $\alpha_{N\chi} : \F'_{N\chi}\to \F_{N\chi}$ is surjective.  Since $\F_\chi^N\simeq \F_{N\chi}$, it follows that $\alpha_{\chi}$ is surjective. Since a surjective morphism of invertible sheaves is an isomorphism, it follows that $\alpha$ and consequently $\phi_X$ are isomorphisms.
\end{remark}

If $S=s$ is a point, then a family of affine stable toric $T$-varieties over $\X$ is simply a morphism $p_{\X}$ from affine stable toric $T$-variety $X$ to $\X$. The image of $p_{\X}$ is a closed $T$-subvariety $Z\subset\X$ which satisfy the following conditions:

\begin{itemize}
\item $k[Z]$ is multiplicity-free as a $T$-module;
\item If $Z=\cup Z_i$ is decomposition into irreducible components, then $\sigma_{\X}=\cup \sigma_{Z_i}$.
\end{itemize}

Conversely, if $Z\subset\X$ satisfy the conditions above, then there is unique affine stable toric $T$-variety $X$ which permits a surjective morphism to $Z$. Moreover, this morphism is defined uniquely modulo the finite subgroup of the $T$-equivariant automorphisms of $X$. We give a precise description of this subgroup in Remark~\ref{autg}. As we will show later, $\M_{\X, T}$ is a stack. If all $T$-subvarieties in $\X$, satisfying the conditions above, have Hilbert function $h_{\Sigma_{\X}}$, then the coarse moduli space of $\M_{\X, T}$ is $H_{X,T}$.


\medskip

Let $F: \M_{\X, T}\to {\rm Sch}$ be the functor that associates to any family $\{X\to S\}\in \M_{\X, T}$ its base $S$.  We denote by $\M_{\X, T}(S)$ the category of affine stable toric $T$-varieties over $\X$ with  base $S$. For simplicity we write $\M_{\X, T}(R)$ for $\M_{\X, T}(\Sp(R))$, where $R$ is some $k$-algebra.
By Remark \ref{rmk}, it follows that $\M_{\X, T}$ is a category fibred  in groupoids. Moreover, by the property of affine morphisms that descent data is effective \cite[Theorem~2.1]{SGA}, it follows that $F$ is a sheaf in categories.

\begin{remark}\label{autg}
 We shall describe the group of automorphisms of an affine stable toric $T$-variety $Y\in \M_{\X, T}(k)$. Note that the automorphisms of $Y$ correspond to $T$-equivariant automorphisms of the algebra $k[Y]$ that are trivial on the image of $k[\X]$. Such an automorphism is given by a map $\psi: \widehat{\Sigma}_{\X}\to k^{\times}$ such that the restriction of $\psi$ to the weight monoid $\Sigma_{Z}$ of any irreducible component $Z$ of $Y$ is a homomorphism of monoids. By Lemma \ref{lem2} (1), it follows that $\psi$ is uniquely determined by its restriction to $A\subset \widehat{\Sigma}_{\X}$.
Denote $\T=(k^{\times})^d$. The embedding $A\subset \widehat{\Sigma}_{\X}$ defines a surjective homomorphism of lattices:  $$\alpha: \Chi(\T)=\Z^d\to \Chi(T).$$
So we see that the group of automorphisms of $Y$ is the subgroup $\Gamma_Y$ of $\T$ defined by the following conditions:

\noindent 1. The characters lying in the kernel of the restriction of $\alpha$ to $\Z^{\#(A\cap \Sigma_{Z})}$, where $Z$ is an irreducible component of $Y$, are trivial on $\Gamma_Y$ (in particular, if $Y$ is irreducible, then this condition means that $\Gamma_Y\subset T\subset \T$).

\noindent 2. If $\chi\in \Sigma_{\X}$ is such that the image of $k[\X]_\chi$ in $k[Y]_\chi$ is non-zero, then the characters in $\alpha^{-1}(\chi)$ are trivial on $\Gamma_Y$.

\noindent In particular, by Lemma \ref{lem2} (2), it follows that for any $\chi\in \Chi(\T)$ the character $N\chi$ is trivial on $\Gamma_Y$, so $\Gamma_Y$ is finite.

In the same way,  the group of automorphisms of a family $X\in \M_{\X, T}(S)$ is isomorphic to a subgroup $\Gamma_X\subset \text{Mor}(S,\T)$ consisting of $S$-points of $\T$ such that for any geometrical $s\in S$ the corresponding $k$-point
of $\T$ is contained in $\Gamma_{X_{s}}\subset \T$.
\end{remark}





One can prove that for any families $X_1,X_2\in \M_{\X, T}(S)$ the functor $$\Isom(X_1, X_2): \rm{ Sch}/S\to \rm Set$$ that associates to any scheme $S'\to S$ the set of isomorphisms in $\M_{\X, T}(S')$ of $X_1\times_S S'$ to $X_2\times_S S'$, is an \'etale sheaf and, consequently, $\M_{\X, T}$ is a stack. But we shall show directly that
$\M_{\X, T}$ is a quotient stack.

Define the action of $T$ on $\X\times \A^d$ by $t\cdot (x,x_1,\ldots,x_d)=(t\cdot x, \chi_1(t) x_1, \ldots, \chi_d(t) x_d)$. Let $H:=H_{\X\times \A^d, T}$ be the corresponding toric Hilbert scheme. Note that there is the action of $\T:=(k^{\times})^d$ on $H$ induced by the diagonal action of $\T$ on $\A^d$.

Denote $U:=U_{\X\times \A^d, T}\subset H\times \X \times \A^d$ and let $p_H$ denote the projection on $H$. The inclusion $\iota: U\to H\times \X \times \A^d$ corresponds to the homomorphism $\Oo_H\otimes k[\X]\otimes k[\A^d] \to \G:=(p_H)_*(\Oo_U)$ of the $\Oo_H$-algebras.

Consider the open subscheme $\widetilde H \subset H$ defined by the following conditions:
\begin{enumerate}

\item The fibers of $p_H$ over $\widetilde H$ are reduced.

\item  The image of $k[\X]_{N\chi}$ generates the invertible sheaf $\G_{N\chi}$ at the points of $\widetilde H$ for any $\chi\in A$.

\item  The image of $x_i\in k[\A^d]$ trivializes the invertible sheaf $\G_{\chi_i}$ over $\widetilde H$ for any $\chi_i\in A$.
\end{enumerate}
It is clear that conditions (2) and (3) are open. Since $p_H$ is a good quotient it maps closed $T$-invariant subsets to closed subsets (see \cite[Theorem~2.3.6]{ADHL}). Now using \cite[Theorem~12.1.1]{EGA} one can show that (1) also is an open condition.

Note that $\widetilde H$ is invariant under the action of $\T$ on $H$. Denote by $\widetilde{\mathcal{H}}$ the corresponding open subfunctor.

\begin{theorem}\label{main}
There is an equivalence of the categories fibred in groupoids $\M^T_\X \cong [\widetilde H/\T]$.
\end{theorem}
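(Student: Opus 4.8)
The plan is to construct an explicit equivalence of categories fibred in groupoids between $\M_{\X,T}$ and $[\widetilde H/\T]$ by exhibiting a functor in each direction and checking they are quasi-inverse. Recall that an object of $[\widetilde H/\T]$ over a base $S$ is a pair $(P\to S,\ f: P\to \widetilde H)$ consisting of a principal $\T$-bundle $P$ together with a $\T$-equivariant morphism $f$ to $\widetilde H$; equivalently, by descent, it is a map $S\to \widetilde H$ up to the $\T$-action, recorded with the bundle of ambiguities. The first step is to describe the functor $\Phi: [\widetilde H/\T]\to \M_{\X,T}$. Given $(P\to S, f: P\to\widetilde H)$, pull back the restricted universal family $U|_{\widetilde H}\subset \widetilde H\times\X\times\A^d$ along $f$ to get a $T$-scheme over $P$ inside $P\times\X\times\A^d$, then take the quotient by $\T$: the key point is that $\T$ acts freely on $P$ and the family $f^*(U|_{\widetilde H})$ carries a compatible $\T$-action (diagonal on $\A^d$), so the quotient $X:=f^*(U|_{\widetilde H})/\T$ is a well-defined $T$-scheme over $S$. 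Condition (1) defining $\widetilde H$ guarantees the fibres are reduced; the Hilbert function condition is inherited from $U$; the morphism $p_\X$ comes from the projection to $\X$ after quotienting; and condition (2) (via Remark~\ref{fin}, using the fixed $A$ and $N$) guarantees that $p_{S\times\X}$ is finite. Thus $X\in\M_{\X,T}(S)$.

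Next I would construct the functor $\Psi:\M_{\X,T}\to[\widetilde H/\T]$ in the opposite direction. Given a family $p_S: X\to S$ with $\F=(p_S)_*\Oo_X$, the invertible sheaves $\F_{\chi_i}$ for $\chi_i\in A$ give a collection of $d$ line bundles on $S$; let $P\to S$ be the associated principal $\T$-bundle, i.e.\ the fibre product of the frame bundles $\mathrm{Isom}(\Oo_S,\F_{\chi_i})$. Pulling $X$ back to $P$, the line bundles $\F_{\chi_i}$ become canonically trivialized, so by Lemma~\ref{lem2}(1) the algebra $\F\otimes_{\Oo_S}\Oo_P$ is generated by trivialized pieces $\F_{\chi_1},\dots,\F_{\chi_d}$, which produces a closed $T$-equivariant embedding of $X\times_S P$ into $P\times\X\times\A^d$: the $\A^d$-coordinates are the chosen trivializing sections of the $\F_{\chi_i}$, and the $\X$-factor is $p_\X$. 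This embedded family has reduced fibres with Hilbert function $h_{\Sigma_{\X\times\A^d,T}}$ — here one must check that the weight monoid of the generic fibre in $\X\times\A^d$ is exactly the monoid $\Sigma$ cut out by the defining construction, which follows because each $\F_{\chi_i}^N\cong\F_{N\chi_i}$ and the generators $x_i$ were used as the trivializations — hence defines a morphism $P\to H$. One then verifies it lands in $\widetilde H$: condition (1) is the reducedness of fibres (Definition~\ref{family}(3)), condition (2) is Lemma~\ref{lem2}(2) applied to the pulled-back family, and condition (3) holds by the very choice of the $x_i$ as trivializing sections. Finally, the $\T$-action on $P$ (rescaling the trivializations) matches the $\T$-action on $H$, giving the desired $\T$-equivariant $f: P\to\widetilde H$, and this construction is functorial in $S$.

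The third step is to check that $\Phi$ and $\Psi$ are quasi-inverse. For $\Psi\circ\Phi$: starting from $(P,f)$, forming $X=f^*(U|_{\widetilde H})/\T$ and then reconstructing the bundle from the line bundles $\F_{\chi_i}$ recovers $P$ because on $\widetilde H$ the sheaf $\G_{\chi_i}$ is trivialized by $x_i$ (condition (3)), so the frame bundle of $\F_{\chi_i}$ over $S$ is exactly $P$; and re-embedding recovers $f$. For $\Phi\circ\Psi$: starting from $X\in\M_{\X,T}(S)$, the composite produces $(f^*(U|_{\widetilde H}))/\T$ where $f:P\to\widetilde H$ was built from $X$; since the embedded family $X\times_S P\hookrightarrow P\times\X\times\A^d$ is by construction the pullback of the universal family, its $\T$-quotient is canonically $X$ again. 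The natural isomorphisms here are unique by Remark~\ref{rmk} (any morphism in $\M_{\X,T}$ over the identity base is an isomorphism), which also handles compatibility with morphisms and pins down the $2$-categorical data.

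I expect the main obstacle to be the careful bookkeeping in $\Psi$: verifying that the morphism $P\to H$ actually factors through the toric Hilbert \emph{scheme} with the correct Hilbert function $h_{\Sigma_{\X\times\A^d,T}}$ — that is, identifying the weight monoid of the image subscheme in $\X\times\A^d$ and checking it is independent of the family — and then that it lands in the open locus $\widetilde H$ rather than just in $H$. This requires combining Lemma~\ref{lem1}, Lemma~\ref{lem2}, the isomorphisms $\F_\chi^N\cong\F_{N\chi}$, and the finiteness criterion of Remark~\ref{fin} in a coherent way; once the embedding and its defining properties are nailed down, the equivalence itself is essentially formal, with Remark~\ref{rmk} doing the work of rigidifying all the isomorphisms.
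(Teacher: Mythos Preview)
Your proposal is correct and follows essentially the same approach as the paper: construct the principal $\T$-bundle $P$ from the invertible sheaves $\F_{\chi_i}$ (the paper writes $P=\lsp_S\bigoplus_{r\in\Z^d}\bigotimes_i\F_{\chi_i}^{r_i}$, which is exactly your frame-bundle description), use the tautological trivializations together with Lemma~\ref{lem2}(1) to embed $X\times_S P$ into $P\times\X\times\A^d$, verify conditions (1)--(3) to land in $\widetilde H$, and define the inverse by pulling back the universal family and taking the $\T$-quotient. The only point you flag as an obstacle---matching the Hilbert function of the embedded family with $h_{\Sigma_{\X\times\A^d}}$---is handled implicitly in the paper and follows immediately from the observation that the choice of $A$ forces $\Sigma_{\X\times\A^d}=\Sigma_\X+\langle A\rangle=\widehat{\Sigma}_\X$.
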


\begin{proof}
 First, we construct a functor $\Upsilon:\M_{\X,T}\to [\widetilde H/\T]$. Let $X\in \M_{\X,T}(S)$. Consider
$$P:= \lsp_S \bigoplus_{r\in \Z^d} \bigotimes_{\chi_i\in A} \F_{\chi_i}^{r_i}\stackrel{q}{\to}S,$$ where $\F:=(p_S)_*(\Oo_X)$.
Note that $q$ is a principal fiber bundle under $\T$. Consider the pullback $$X_P:=X\times_S P\stackrel{p}{\to}P$$ of $X\to S$ by $q$. Then for any $\chi_j\in A$ we have a natural trivialization of $p_*(\Oo_{X_P})_{\chi_j}$:
$$p_*(\Oo_{X_P})_{\chi_j}=\left(\F_{\chi_j}\otimes_{\Oo_S}\bigoplus_{r\in \Z^d} \bigotimes_{\chi_i\in A} \F_{\chi_i}^{r_i}\right)^{\widetilde{\,\,\,\,\,\,\,\,}}\stackrel{\varkappa_j}{\xrightarrow{\sim}}\Oo_P.$$ (Here we use the notations of \cite[Exercise~II.5.17(e)]{Har}.) So, there is a natural surjection of $\Oo_P$-algebras $\Oo_P\otimes k[\X]\otimes k[\A^d]\stackrel{\varphi}{\twoheadrightarrow} p_*(\Oo_{X_P})$, where $\varphi(1\otimes 1\otimes x_i)=\varkappa_i^{-1}(1)$, which, in turn, corresponds to the closed immersion $X_P\hookrightarrow P\times \X \times \A^d$ of their local spectrums.

We state that $X_P\in\widetilde{\mathcal{H}}(P)$. We need to check conditions (1)---(3) for $p$. Condition (1) is satisfied already for $q$ and it is stable under the base change. By Remark~\ref{fin}, condition (2) is equivalent to the property of morphism $p$ to be finite that is also satisfied for $q$ and stable under the base change. Condition (3) follows from the definition of $\varphi$.

 An element $X_P\in\widetilde{\mathcal{H}}(P)$ corresponds to a morphism $P\to \widetilde H$. It is clear that this morphism is $\T$-equivariant. Thus, we have the diagram
$$
\xymatrix{
P \ar[r]\ar[d]_q & \widetilde H \\
\,\,\, S \,\, ,
}
$$
which is an element of $[\widetilde H/\T](S)$.

Notice that we have an action of $\T$ on $X_P\subset P\times \X \times \A^d$ given by the restriction of the diagonal action of $\T$ (where $\T$ acts trivially on $\X$).

Since $$(q\circ p)_*(\Oo_{X_P})^{\T}=\F\otimes_{\Oo_S}q_*(\Oo_P)^{\T}\simeq \F,$$ we see that
 $p_S: X\to S$ is canonically isomorphic (in $\M_{\X,T}(S)$) to the quotient of $p$ : $$p/\!/\T:X_P/\!/\T\simeq X\to S.$$

Conversely, given  a principal fiber bundle $q:P\to S$ under $\T$  and a $\T$-equivariant morphism $P\to \widetilde H$, consider the fibred product $X_P:=P\times_{\widetilde H}\widetilde U\in \widetilde{\mathcal{H}}(P)$, where $\widetilde U$ is the universal family over $\widetilde H$. Then we have the (flat) quotient morphism $X:=X_P/\!/\T\to P/\!/\T=S$. Since the projection $X_P\to \X\times P$ is finite (use Remark~\ref{fin} again), it follows that the corresponding morphism $X\to \X\times S$ is finite and $X\in \M_{\X, T}(S)$. This gives us a functor $\Upsilon': [\widetilde H/\T]\to\M_{\X, T}$.

We see immediately that $\Upsilon'\circ\Upsilon\simeq {\rm Id}_{\M_{\X, T}}$ and $\Upsilon\circ\Upsilon'\simeq {\rm Id}_{[\widetilde H/\T]}$.
\end{proof}

\begin{example}
Let $T=k^{\times}$, $\X=\A^1$ and $T$ acts on $\X$ by $t\cdot x=t^m\cdot x$, $m>0$. Then $\Chi(T)\cong \Z$, $\Sigma_{\X}=m\N$, $\widehat{\Sigma}_{\X}=\N$. Take $A=\{1\}$, $d=1$, $N=m$. It is easy to see that they satisfy the conditions of Lemma~\ref{lem2}. The diagonal action of $T$ on the affine plane $\A^2=\X\times\A^1$ is given by the matrix $\bigl(\begin{smallmatrix}
t^d & 0\\0 & t
\end{smallmatrix}\bigr)$. We are in situation of the subtorus action on the toric variety, so one can use \cite[Theorem~4.5]{C} for the computation of fans of $H$ and $U$ (see the picture below), but it is also not hard to see directly that $H=\P^1$ and $U$ is the weighted blow-up of $\A^2$ with weights $(1,m)$.

\begin{center}
\begin{picture}(200,200)

\put(100,100){\line(0,1){100}}
\put(100,100){\line(1,0){100}}
\put(100,100){\line(2,1){100}}
\put(100,100){\circle*{5}}
\put(60,80){\circle*{5}}
\put(100,145){\circle*{3}}
\multiput(100,145)(20,0){5}
{\line(1,0){10}}
\put(190,100){\circle*{3}}
\multiput(190,100)(0,17){3}
{\line(0,1){10}}
\put(60,80){\line(-1,2){40}}
\put(60,80){\line(1,-2){40}}
\qbezier(175,110)(160,70)(90,40)
\qbezier(140,165)(100,170)(43,134)
\put(43,134){\line(1,1){10}}
\put(43,134){\line(4,1){14}}
\put(90,40){\line(6,5){10}}
\put(90,40){\line(6,1){14}}

\put(35,50){$H$}
\put(160,160){$U$}
\put(185,90){$m$}
\put(90,140){$1$}
\end{picture}
\end{center}

Let $(x,y)$ and $(u:v)$ be coordinates on $\A^2$ and homogeneous coordinates on $H$ respectively. The torus $\T=k^{\times}$ acts on $H$ by $t\cdot (u:v)=(u:t^mv)$. The universal family $U\subset H\times \A^2$ is the surface $xu=y^mv$. We see immediately that the fiber of $p_H$ over $(0:1)$ is not reduced and $k[\X]_N=\langle x\rangle$ does not generate $\G_{N}$ at the point $(1:0)$. So $\widetilde{H}=\P^1\setminus\{(0:1),(1:0)\}$ and $[\widetilde{H}/\T]\cong[pt/(\Z/m\Z)]$.

 On the other side, $X=\A^1$ considered as a toric variety under the torus $T$ is the unique affine stable toric $T$-variety with $\Sigma_X=\N$. Then one can show that all families of affine stable toric varieties over $\X$ are trivial. The group $\Z/m\Z$ is exactly the group of automorphisms of $X\in \M_{\X,T}(k)$ (see Remark~\ref{autg}), so $\M_{\X,T}\cong[pt/(\Z/m\Z)]$. That fits nicely with the statement of Theorem~\ref{main}.
\end{example}

\section*{Acknowledgements}
Authors are grateful to Ivan V. Arzhantsev for many helpful comments and suggestions. The first author is thankful to Michel Brion for stimulating ideas and useful
 discussions. The second author thanks Dmitriy O. Orlov and Hendrik S\"{u}{\ss} for helpful discussions and comments.

\normalsize

\bigskip

\end{document}